\documentclass[12pt]{article}

\usepackage[utf8]{inputenc}

\usepackage[a4paper,nomarginpar]{geometry}
\geometry{
 lmargin=30mm,  %25mm
 rmargin=30mm,  %25mm
 tmargin=30mm,  %25mm
 bmargin=30mm}  %25mm

\usepackage[english]{babel}

\usepackage{amsmath,amsfonts,amssymb,amsthm}
\usepackage{amscd}
\usepackage{graphics}

\usepackage{graphicx}
\usepackage{epsfig}
\usepackage{tikz}
\usepackage{multicol}
\usepackage{verbatim}
\usetikzlibrary{arrows}
\usetikzlibrary{shapes,snakes}

\theoremstyle{definition}
\newtheorem{theorem}{Theorem}
\newtheorem{lemma}[theorem]{Lemma}

\theoremstyle{definition}
\newtheorem{definition}[theorem]{Definition}

\newtheorem{remark}[theorem]{Remark}
\newtheorem{remarks}[theorem]{Remarks}
\newtheorem{problem}[theorem]{Problem}

%% Algunos comandos

\newcommand{\N}{\mathbb{N}} %% Conjunto naturales:     \N
 %% Conjunto enteros:       \Z
\newcommand{\R}{\mathbb{R}} %% Conjunto reales:        \R
\newcommand{\C}{\mathbb{C}} %% Conjunto complejos:     \C
 %% Disco unidad:           \D
 %% Circ. unidad:           \T
 %% Cuerpo		           \K
\newcommand{\ve}{\varepsilon}
\newcommand{\al}{\alpha}

% Absolute value notation

% Norm notation
\newcommand{\norm}[1]{\lVert#1\rVert}

 % Orbit command

\newcommand{\ldens}{\operatorname{\underline{dens}}}
\newcommand{\udens}{\operatorname{\overline{dens}}}

      %% Func. env. lineal:        \spa

\begin{document}

\author{Luis Bernal-Gonz\'alez and Antonio Bonilla}

\title{Order of growth of distributional irregular entire functions for the differentiation operator}

\maketitle

\begin{abstract}\footnote{2010 {\it Mathematics Subject Classification.} Primary
30D15; Secondary 15A03, 30H50, 47A16, 47B38.}
\footnote{{\it Key words
and phrases.} Differentiation operator, irregular vector, distributionally irregular vector, hypercyclic operator, frequently
hypercyclic operator, rate of growth, entire function.}
\noindent We study the rate of growth of entire functions that are distributionally irre\-gu\-lar for the differentiation operator $D$. More specifically, given \,$p \in [1,\infty ]$ \,and \,$b \in (0,a)$, where \,$a = \displaystyle \frac{1}{2\, \max\{2,p\}}$, we prove that there exists a distributionally irregular entire function \,$f$ \,for the operator \,$D$ \,such that its $p$-integral mean function \,$M_p(f,r)$ \,grows not more rapidly than \,$e^r r^{-b}$. This completes related known results about the possible rates of growth of such means for $D$-hypercyclic entire functions. It is also obtained the existence of dense li\-ne\-ar submanifolds of \,$H(\C )$ \,all whose nonzero vectors are $D$-distributionally irregular and present the same kind of growth.
\end{abstract}

\section{Introduction}

\quad In 1988, Beauzamy \cite{beau}, when trying to describe the erratic dynamics of certain vectors under the action of concrete operators, introduced the following notion.

\begin{definition}
Let $X$ be a Banach space and $T : X \rightarrow X$ be a bounded operator. A vector $x_0\in X$ is said to be \emph{irregular} for $T$ if
$\lim \inf_{n \to \infty} \norm{T^nx} =0$ and $\lim \sup_{n \to \infty} \norm{T^nx}=\infty$.
\end{definition}

Inspired by this definition, and by the notion of distributional chaotic mapping due to Schweizer and Sm\'{\i}tal (\cite{schweizersmital}, see also \cite{oprocha}), in \cite{BBMP11} it is considered the stronger property contained in Definition \ref{def-distributionally irregular} below. Recall that, if $A$ is a subset of the set $\N$ of positive integers, then its upper density and its lower density are respectively defined by
$$
\udens(A) = \limsup_{n \to \infty} {{\rm card} (A \cap \{1,2, \dots ,n\}) \over n}, \,\,
\ldens(A) = \liminf_{n \to \infty} {{\rm card} (A \cap \{1,2, \dots ,n\}) \over n}.
$$

\begin{definition} \label{def-distributionally irregular}
Let $X$ be a Banach space and $T : X \rightarrow X$ be a bounded operator.
A vector \,$x_0 \in X$ \,is said to be \emph{distributionally irregular} for \,$T$ \,if there are increasing sequences of positive
integers \,$A = (n_k)_k$ \,and \,$B=(m_k)_k$ \,such that
$$
\udens(A) = 1 = \udens(B), \ \ \lim_{k \to \infty} \norm{T^{n_k}x} = 0 \hbox{ \ and \ } \lim_{k \to \infty} \norm{T^{m_k} x_0}=\infty .
$$
\end{definition}

Now, let \,$Y$ \,be a Fr\'echet space, that is, $Y$ is a vector space endowed with an increasing sequence
$(\|\cdot\|_k)_{k \in \N}$ of seminorms (called a {\em fundamental sequence of seminorms}) that defines a metric
\[
 d(x,y) := \sum_{k=1}^\infty \frac{1}{2^k} \min\{1,\norm{x-y}_k\} \quad (x,y \in Y),
\]
under which \,$Y$ \, is complete. By \,$B(Y)$ \,we have denoted, as usual, the set of all continuous linear operators \,$T : Y \to Y$.

\vskip .15cm

The following concepts, that are a generalization of the above ones to Fr\'{e}chet spaces, were introduced in \cite{BBMP}.

\begin{definition}
Given \,$T \in B(Y)$ \,and \,$x_0 \in Y$, we say that \,$x_0$ \,is an \emph{irregular vector} for \,$T$ \,if there are \,$m \in \N$ and strictly increasing sequences $(n_k)$ and $(j_k)$ of positive integers such that
\[
 \lim_{k \to \infty} T^{n_k} x_0 = 0
 \ \ \mbox{ and } \ \
 \lim_{k \to \infty} \norm{{T^{j_k} x_0}}_m = \infty.
\]
\end{definition}

\begin{definition}
Given \,$T \in B(Y)$ \,and \,$x_0 \in Y$, we say that \,$x_0$ \,is a \emph{distributionally irregular vector} for \,$T$ \,if there are \,$m \in \N$ \,and \,$A,B \subset \N$ \,with \,$\udens(A) = 1 = \udens(B)$ \,such that
\[
 \lim_{n \to \infty \atop n \in A} T^{n} x_0 = 0
 \ \ \mbox{ and } \ \
 \lim_{n \to \infty \atop n \in B} \norm{{T^{n} x_0}}_m = \infty.
\]
\end{definition}

Let us consider the Fr\'echet space \,$H(\Bbb C)$ \,of entire functions endowed of the family of seminorms \,$\|f\|_{\infty,\overline{B(0,k)}}$
\,$(k \in \N )$. Here $B(a,r)$ denotes, as usual, the open ball in the complex plane \,$\C$ \,with center \,$a$ \,and radius \,$r > 0$; and, for a nonempty set $A \subset \C$, we have set \,$\|f\|_{\infty,A} := \sup \{|f(z)|: \, z \in A\}$. Hence the metric
\, $d(f,g) = \displaystyle \sum _{k=1}^{\infty} \frac{1}{2^k} \min\{1,\|f\|_{\infty,\overline{B(0,k)}}\}$ \,defines the topology of \,$H(\C )$.
If \,$D : f \in H(\C ) \mapsto f' \in H(\C )$ \,is the differentiation operator, then the above definitions read as follows.

\begin{definition}
Given $f \in H(\Bbb C)$, we say that $f$ is an \emph{irregular function} for \,$D$ \,if there are $m \in \N$ and increasing sequences
$(n_k)$ and $(j_k)$ of positive integers such that
\[
 \lim_{k \to \infty} D^{n_{k}}f = 0
 \ \ \mbox{ and } \ \
 \lim_{k \to \infty} \|D^{j_k}f\|_{\infty, \overline {B(0,m)}} = \infty.
\]
\end{definition}

\begin{definition}\label{def-D-distrirregular}
Given $f \in H(\Bbb C)$, we say that $f$ is a \emph{distributionally irregular function} for \,$D$ \,if there are $m \in \N$ and $A,B \subset \N$ with
$\udens(A) = \udens(B) = 1$ such that
\[
 \lim_{n \to \infty \atop n \in A} D^{n}f = 0
 \ \ \mbox{ and } \ \
 \lim_{n \to \infty \atop n \in B} \|D^{n}f\|_{\infty, \overline {B(0,m)}} = \infty.
\]
\end{definition}

In \cite{BBMP} is proved that $D$ admits distributionally irregular entire functions. In fact, it is shown in \cite{BBMP} that there are $T$-distributionally irregular entire functions for any given non-scalar operator \,$T$ \,on \,$H(\C )$ \,that commutes with \,$D$.

\vskip .15cm

Another important property related to the dynamics of operators is that of hypercyclicity. If $T$ is an operator defined on a topological vector space $X$ and $x_0 \in X$, then the vector $x_0$ is called hypercyclic for $T$ provided that the orbit \,$\{T^n x_0: \, n \in \N\}$ \,is dense in $X$; and, according to Bayart and Grivaux \cite{BayGri}, $x_0$ is called frequently hypercyclic for $T$ whenever the following stronger property is satisfied: for any prescribed nonempty open set $U \subset X$, $\ldens (\{n \in \N : \, U \cap T^n x_0 \ne \varnothing \}) > 0$. The existence of entire functions which are hypercyclic (frequently hypercyclic) for \,$D$ \,is known since MacLane \cite{MacLane} (Bayart and Grivaux \cite{BayGri}, resp.). For excellent accounts of these topics, we refer the reader to the books \cite{BaM,GEP}. Notice that, if $X$ is a Fr\'echet space, every
$T$-hypercyclic vector is $T$-irregular.

\vskip .15cm

Many papers have been devoted to study the {\it rate of growth} of entire functions that are hypercyclic or frequently hypercyclic for \,$D$, see for instance \cite{BeBo02,BlaBoGE09,Bo09,BB,BoGE06,BoGE07,DS,Gro90,Shk93}. In this paper, we aim to study the rate of growth of entire functions that are {\it irregular} or {\it distributionally irregular} for the differentiation operator. The growth for the ``irregular'' case will be completely determined, while lower and upper bounds will be provided for the growth of $D$-distributionally irregular functions. This research is completed by analyzing the distributional irregularity in weighted Banach spaces as well as the existence of large vector subspaces of $D$-distributionally irregular functions satisfying the mentioned growth conditions.

\section{Order of growth of distributional irre\-gu\-lar entire functions}

\quad For every $r > 0$, every entire function $f$ \,and \,$1\leq p < \infty$ \,we will consider the integral $p$-means
\[
M_p(f,r)=\Big( \frac{1}{2\pi}\int_0^{2\pi} |f(re^{it})|^p \,dt \Big)^{1/p}
\]
as well as the function \,$M_\infty(f,r)=\sup_{|z|=r} |f(z)|$.

\vskip .15cm

In 1990, Grosse-Erdmann \cite{Gro90} discovered that there are not $D$-hypercyclic entire functions $f$ with $M_\infty (f,r)$ growing not more rapidly than $e^r/\sqrt{r}$, while there do exist $D$-hypercyclic entire functions growing under any prescribed rate speeder than $e^r/\sqrt{r}$.
Our first result states that the critical order of growth for hypercyclic and irregular entire functions for the differentiation operator
is the same, and that this is independent of the $p$-mean considered.

\begin{theorem}
\textit{Let \,$1\leq p\leq \infty$. We have:
\begin{enumerate}
\item[\rm (a)] For any function \,$\varphi:\mathbb{R}_+\to\mathbb{R}_+$
\,with \,$\varphi(r)\to\infty$ \,as $r\to\infty$ \,there is a $D$-irregular entire function \,$f$ \,with
\[
M_p(f,r)\leq \varphi(r)\frac{e^r}{\sqrt{r}}\quad\mbox{for $r>0$
sufficiently large}.
\]
\item[\rm(b)] There is no $D$-irregular entire function $f$ that satisfies
\[
M_p(f,r)\leq C \frac{e^r}{\sqrt{r}}\quad \mbox{for } r > 0,
\]
where \,$C$ \,is a positive constant.
\end{enumerate}}
\end{theorem}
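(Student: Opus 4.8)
The plan is to reduce both parts to the single critical rate $e^r/\sqrt r$ for the $M_\infty$-mean, exploiting the elementary chain $M_1(f,r)\le M_p(f,r)\le M_\infty(f,r)$, valid for every $p\in[1,\infty]$ (the left inequality by Jensen applied to $\tfrac{dt}{2\pi}$, the right one trivially), together with Grosse--Erdmann's growth theorem \cite{Gro90} and the implication ``$D$-hypercyclic $\Rightarrow$ $D$-irregular'' recorded in the Introduction. For part (a) I would construct nothing new. Given $\varphi\to\infty$, Grosse--Erdmann's result furnishes a $D$-hypercyclic entire function $f$ with $M_\infty(f,r)\le \varphi(r)\,e^r/\sqrt r$ for all sufficiently large $r$. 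Since $H(\C)$ is a Fr\'echet space, $f$ is automatically $D$-irregular; and since $M_p(f,r)\le M_\infty(f,r)$ for every $p$, the very same growth bound holds for all $p$-means at once. Thus (a) is just the observation that the existence half of Grosse--Erdmann transfers verbatim to irregularity and, by monotonicity of the means, to every $p$ simultaneously.

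For part (b) I would argue by contradiction. Suppose $f$ were $D$-irregular with $M_p(f,r)\le C\,e^r/\sqrt r$ for all $r>0$; then also $M_1(f,r)\le C\,e^r/\sqrt r$. The crux is to bound $\|D^jf\|_{\infty,\overline{B(0,m)}}$ \emph{uniformly in} $j$ through the Cauchy integral formula for the derivatives, integrating the modulus rather than passing to a maximum: for $|z|\le m<R$,
\[
|D^jf(z)| = \Big|\frac{j!}{2\pi i}\oint_{|w|=R}\frac{f(w)}{(w-z)^{j+1}}\,dw\Big|
\le \frac{j!\,R}{(R-m)^{j+1}}\,M_1(f,R)
\le \frac{C\,j!\,\sqrt{R}\,e^{R}}{(R-m)^{j+1}}.
\]
Choosing $R=j+m+1$ (so that $R-m=j+1$) and applying Stirling's formula to $(j+1)^{j+1}$ collapses the right-hand side to a quantity bounded by a constant $C_m$ depending only on $m$. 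The decisive point is that feeding the \emph{average} $M_1$ into the Cauchy estimate avoids the polynomial loss one would incur by first dominating $M_\infty$ by $M_p$ on a slightly larger circle; it is precisely this that keeps the estimate uniform in $j$. Consequently $\sup_j\|D^jf\|_{\infty,\overline{B(0,m)}}\le C_m<\infty$ for every $m$, which flatly contradicts the requirement, in the definition of $D$-irregularity, that $\|D^{j_k}f\|_{\infty,\overline{B(0,m)}}\to\infty$ along some subsequence. Hence no such $f$ can exist, and (b) follows; note this simultaneously re-proves and strengthens the lower bound for hypercyclicity, since hypercyclic vectors are irregular.

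The only genuinely computational point, and the step I expect to need the most care, is the optimization in (b): verifying via Stirling that $j!\,\sqrt{j+m+1}\,e^{\,j+m+1}/(j+1)^{j+1}$ remains bounded as $j\to\infty$ (in fact it tends to $\sqrt{2\pi}\,e^{m}$). Everything else is soft --- the reduction to $p=\infty$ in (a), and the choice to integrate $|f|$ directly in the Cauchy formula in (b) rather than routing the bound through $M_\infty$.
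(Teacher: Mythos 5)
Your proposal is correct and follows essentially the same route as the paper: part (a) by citing Grosse--Erdmann's hypercyclic construction plus the implication hypercyclic $\Rightarrow$ irregular (with the reduction $M_p\le M_\infty$ implicit in the paper), and part (b) by feeding $M_1(f,R)\le M_p(f,R)$ into the Cauchy integral formula and optimizing $R\asymp n$ via Stirling to get a bound on $\|D^nf\|_{\infty,\overline{B(0,m)}}$ uniform in $n$. The only difference is the cosmetic choice $R=j+m+1$ versus the paper's $R=n$; both yield the same limiting constant up to the factor $e^m$.
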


\begin{proof}
\noindent Part (a) follows from the corresponding result for $D$-hypercyclic functions \cite{Gro90}
(see also \cite{Shk93}), since all hypercyclic vectors are irregular.

\vskip .15cm

As for (b), assume that $f$ is an entire function satisfying $M_p (f,r) \le C \frac{e^r}{\sqrt{r}}$ $(r > 0)$ for some $C > 0$.
Fix $m \in \N$ and a radius $R > m$. From Cauchy's integral formula for derivatives of holomorphic functions, we get for all $n \in \N$ that
$$
D^{n}f(z) = {n! \over 2 \pi i} \oint_{|\xi | = R} {f(\xi ) \over (\xi - z)^{n+1}} \, d\xi \quad (z \in  \overline{B(0,r)}).
$$
>From here and the triangle inequality one derives that
\[
\|D^{n}f\|_{\infty,\overline{B(0,m)}} \leq \frac{n!\,  R}{(R-m)^{n+1}} M_1(f,R).
\]
Since $M_1(f,R)\leq M_p(f,R) \le C \, e^R / \sqrt{R}$, we find that \,$\|D^n f\|_{\infty, \overline{B(0,m)}} \leq
\frac{C \, n! \,e^R \, R^{1/2}}{(R - m)^{n+1}}$ \,for all $n \in \N$. Letting \,$R = n$ \,we get
$$
\|D^n f\|_{\infty, \overline{B(0,m)}} \leq { C \, n! \; e^{n}\over n^{n + {1 \over 2}}  (1 - {m \over n})^{n+1}}
$$
for all $n > m$. Now Stirling's formula \,$n! \sim \sqrt{2 \pi n} \,e^{-n}\, n^n$ ($n \to \infty$)
\,and the fact \,$(1-{m\over n})^{n+1} \to e^{-m}$ $(n \to \infty )$ \,imply
that the sequence $\{\|D^n f\|_{\infty, \overline{B(0,m)}}\}_{n \ge 1}$ \,is bounded, so that \,$f$ \,cannot be irregular.
\end{proof}

The following auxiliary result, which can be found in \cite[Lemma 2.2]{BlaBoGE09}, will be needed in the sequel.

\begin{lemma}\label{L-Bar}
{\it Let \,$0<\alpha\leq 2$ \,and \,$\beta\in \mathbb{R}$. Then there is a constant \,$C>0$ \,such that
\[
\sum_{n=0}^\infty\frac{r^{\alpha n}}{(n+1)^\beta (n!)^\alpha}\leq
C r^{\frac {1-\alpha-2\beta}{2}}e^{\alpha r} \hbox{ \ for all } r > 0.
\]}
\end{lemma}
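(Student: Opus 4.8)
The plan is to estimate the series $\sum_{n=0}^\infty r^{\alpha n}/[(n+1)^\beta (n!)^\alpha]$ by comparing it with a quantity that is easy to sum in closed form, exploiting the restriction $0 < \alpha \le 2$. The natural closed-form benchmark is $e^{\alpha r} = \sum_{n=0}^\infty (\alpha r)^n/n! = \sum_{n=0}^\infty r^n \alpha^n / n!$, or more conveniently the series $\sum_n r^{\alpha n}/(n!)^\alpha$, whose terms $r^{\alpha n}/(n!)^\alpha = (r^n/n!)^\alpha$ are the $\alpha$-th powers of the terms of $e^r$. The first move I would make is to handle the polynomial factor $(n+1)^\beta$ and reduce to understanding the bare series $S(r) := \sum_n r^{\alpha n}/(n!)^\alpha$, since once $S(r)$ is controlled, inserting $(n+1)^{-\beta}$ amounts to weighting the terms by a polynomial in $n$ that is slowly varying compared with the sharply peaked summand.

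The key idea is that the terms $a_n := r^{\alpha n}/(n!)^\alpha$ are, up to the exponent $\alpha$, the terms of the exponential series, and they peak near $n \approx r$ with a Gaussian-type profile. First I would locate the peak: the ratio $a_{n+1}/a_n = r^\alpha/(n+1)^\alpha$ crosses $1$ when $n+1 \approx r$, so the dominant contribution comes from $n$ in a window around $r$. Then I would apply a Laplace/saddle-point style estimate, using Stirling's formula $n! \sim \sqrt{2\pi n}\,e^{-n} n^n$ to write
\[
\frac{r^{\alpha n}}{(n!)^\alpha} \sim \frac{r^{\alpha n} e^{\alpha n}}{(2\pi n)^{\alpha/2} n^{\alpha n}} = \frac{1}{(2\pi n)^{\alpha/2}} \Big(\frac{r e}{n}\Big)^{\alpha n}.
\]
Expanding the exponent near $n = r$ produces a Gaussian sum of width $\sim \sqrt{r}$, and summing it against the prefactor $(2\pi n)^{-\alpha/2} (n+1)^{-\beta}$ evaluated at $n \approx r$ yields a main term of order $r^{-\alpha/2}\, r^{-\beta}\cdot \sqrt{r}\cdot e^{\alpha r}$ up to constants. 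Collecting the powers of $r$ gives the exponent $-\tfrac{\alpha}{2} - \beta + \tfrac12 = \tfrac{1-\alpha-2\beta}{2}$, which matches the claimed bound exactly.

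The constraint $0 < \alpha \le 2$ is what makes this work cleanly and is where I expect the main obstacle to lie: for $\alpha \le 2$ the function $x \mapsto x^\alpha$ is sub-quadratic, so the second-order term in the Laplace expansion still produces an integrable Gaussian and the tails of the sum (for $n$ far from $r$) decay fast enough to be absorbed into the constant; if $\alpha$ were allowed to grow without bound the peak would narrow and the prefactor powers would not combine so neatly. The hard part will be converting the heuristic saddle-point asymptotic into a rigorous \emph{uniform} upper bound valid for \emph{all} $r > 0$, not merely as $r \to \infty$. To do this I would split the sum into the bulk region $|n - r| \le K\sqrt{r}$ and the two tails, bound the bulk by comparison with a Gaussian integral (replacing the sum by an integral and using monotonicity of the summand on each side of the peak), and bound the tails by a geometric series using the ratio $a_{n+1}/a_n$, which is bounded away from $1$ once $|n-r|$ is large. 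Finally, to secure a single constant $C$ that works for small $r$ as well as large $r$, I would note that the left-hand side is continuous in $r$ and that for $r$ in any compact interval $[0, r_0]$ the bound holds trivially after adjusting $C$, so only the asymptotic regime genuinely requires the saddle-point analysis.
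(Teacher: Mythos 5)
The paper offers no proof of this lemma at all: it is quoted verbatim from \cite[Lemma 2.2]{BlaBoGE09}, so there is no internal argument to compare against and your attempt has to stand as a self-contained proof. Judged that way, your saddle-point plan is the right mechanism for the large-$r$ regime: the summand $r^{\alpha n}/(n!)^{\alpha}$ does peak at $n\approx r$ with Gaussian width of order $\sqrt{r/\alpha}$, and your bookkeeping $-\tfrac{\alpha}{2}-\beta+\tfrac12=\tfrac{1-\alpha-2\beta}{2}$ correctly reproduces the claimed exponent; the bulk/tail splitting (Gaussian-integral comparison near $n=r$, geometric domination of the tails via the ratio $a_{n+1}/a_n=r^{\alpha}/(n+1)^{\alpha}$) is standard and can be made rigorous. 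One minor correction: the hypothesis $\alpha\le 2$ is not what makes the Laplace argument work --- the same asymptotics hold for any fixed $\alpha>0$ --- it is simply the range of conjugate exponents $q=p/(p-1)$ that the paper needs.

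The genuine gap is your final step, the claim that on any compact interval $[0,r_0]$ the bound ``holds trivially after adjusting $C$''. It does not. The $n=0$ term of the series equals $1$, so the left-hand side is at least $1$ for every $r>0$, whereas the right-hand side $Cr^{(1-\alpha-2\beta)/2}e^{\alpha r}$ tends to $0$ as $r\to 0^{+}$ whenever $1-\alpha-2\beta>0$. For instance, with $\alpha=1$ and $\beta=-1$ the left-hand side is exactly $(1+r)e^{r}$ and no constant $C$ makes $Cre^{r}$ dominate it near $r=0$. So in that parameter regime no single constant works for all $r>0$, and the statement as transcribed is in fact false near the origin; it must be read for $r\ge 1$ (or $r\ge r_0>0$), which is all that is used in the proof of Theorem \ref{main theorem}, where moreover the relevant sum starts at $n=1$. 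You should either restrict to $r$ bounded away from $0$ or flag this defect explicitly rather than dismissing small $r$ as trivial. Beyond that, the proposal remains a sketch: the uniform (not merely asymptotic) constants in the bulk estimate and the tail bounds still need to be written out, but there is no obstruction to doing so.
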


Let $2\leq p\leq\infty$, by the Hausdorff--Young inequality (see, for example, \cite{Ka76}) we obtain that
\[
M_p\Big(\sum a_n\frac{z^n}{n!},r\Big) \le \Big( \sum |a_n|^q\frac{r^{qn}}{(n!)^{q}}\Big)^{1/q},
\]
where $q := {p \over p-1}$ is the conjugate exponent of $p$.

\vskip .2cm

Our second result provides the construction of a $D$-distributionally irregular entire function having, in some sense, prescribed control on their
Taylor coefficients.

\begin{theorem} \label{coefficientstheorem}
{\it Assume that \,$\{\omega_n \}_{n \ge 1} \subset [0,+\infty )$ \,is a sequence with
$\lim_{n \to \infty} \omega_n = + \infty$.
Then there exists an entire function \,$f$ \,satisfying the following properties:
\begin{enumerate}
\item[\rm (a)] $|f^{n}(0)| \le \omega_n$ \,for all \,$n \ge 1$, and
\item[\rm (b)] $f$ \,is $D$-distributionally irregular in \,$H(\C )$.
\end{enumerate}
}
\end{theorem}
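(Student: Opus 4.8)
The plan is to prescribe the Taylor coefficients of $f$ directly. Writing $f(z)=\sum_{n\ge 0}\frac{d_n}{n!}z^n$, one has $f^{(n)}(0)=d_n$, so condition (a) becomes simply $|d_n|\le\omega_n$. The decisive observation is that differentiation acts as a shift on these normalized coefficients: by an immediate induction,
\[
D^{j}f(z)=\sum_{k\ge 0}\frac{d_{k+j}}{k!}\,z^{k}.
\]
Hence, on the one hand, $\|D^{j}f\|_{\infty,\overline{B(0,m)}}\ge |D^{j}f(0)|=|d_j|$ for every $m$, which will furnish the blow-up, and, on the other hand, $\|D^{j}f\|_{\infty,\overline{B(0,m)}}\le\sum_{k\ge 0}\frac{|d_{k+j}|}{k!}\,m^{k}$, which will furnish the decay. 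Thus the whole problem reduces to choosing the numbers $d_n$ so that $|d_n|\le\omega_n$, that $|d_j|\to\infty$ along a set of upper density $1$, and that the shifted tail $\sum_{k}\frac{|d_{k+j}|}{k!}m^{k}$ tends to $0$ along another set of upper density $1$.

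To do this I would use a block construction. First, since $\omega_n\to\infty$, pick a nondecreasing $\psi_n\to\infty$ with $\psi_n\le\omega_n$ (for instance $\psi_n=\inf_{k\ge n}\omega_k$) and set $\Phi(n):=\min\{\psi_n,\log(n+2)\}$, so that $\Phi(n)\to\infty$, $\Phi(n)\le\omega_n$, and $\Phi$ grows subfactorially. Next partition $\N$ into consecutive blocks $J_i=(N_{i-1},N_i]$ with $N_0=0$ and endpoints $N_i$ increasing so rapidly that $N_i/N_{i-1}\to\infty$. On the odd (``zero'') blocks put $d_n=0$; on the even (``large'') blocks put $d_n=\Phi(n)$. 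Define $B=\bigcup_i J_{2i}$ and $A=\bigcup_i (N_{2i-2},\,N_{2i-1}-c_i]$, where $c_i$ is a buffer removed from the right end of each zero block. Because each block eventually dwarfs everything preceding it, evaluating the counting ratios at $n=N_{2i}$ and at $n=N_{2i-1}-c_i$ gives $\udens(B)=1=\udens(A)$, provided $c_i=o(N_{2i-1})$.

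Then (a) is clear: $f$ is entire since $|c_n|^{1/n}\le(\log(n+2)/n!)^{1/n}\to 0$, and $|f^{(n)}(0)|=|d_n|\le\Phi(n)\le\omega_n$. For (b), the blow-up along $B$ is immediate: for $j\in J_{2i}$ one has $\|D^{j}f\|_{\infty,\overline{B(0,1)}}\ge|d_j|=\Phi(j)\to\infty$, so $m=1$ works. For the decay along $A$, fix $m$ and let $j\in(N_{2i-2},N_{2i-1}-c_i]$; since $d_n=0$ throughout the zero block, the shift formula shows $D^{j}f(z)=\sum_{k>N_{2i-1}-j}\frac{d_{k+j}}{k!}z^{k}$, a tail whose lowest index exceeds $c_i$. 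Bounding $|d_{k+j}|\le\Phi(k+j)\le\log(k+2)+\log(N_{2i-1}+2)$ then yields
\[
\|D^{j}f\|_{\infty,\overline{B(0,m)}}\le \log(N_{2i-1}+2)\sum_{k\ge c_i}\frac{m^{k}}{k!}+\sum_{k\ge c_i}\frac{\log(k+2)}{k!}\,m^{k},
\]
and for fixed $m$ both sums tend to $0$ as $i\to\infty$, the second trivially and the first once the buffer is chosen to outrun the factor $\log(N_{2i-1}+2)$.

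The main obstacle is precisely the tension in this last estimate: I must make the blocks grow fast enough that $A$ and $B$ \emph{both} have upper density $1$ --- the defining feature that upgrades ordinary irregularity to distributional irregularity --- while keeping the factor $\log(N_{2i-1}+2)$ small compared with the factorial tail $\sum_{k\ge c_i}m^{k}/k!\asymp m^{c_i}/c_i!$ supplied by the buffer. Taking, say, $c_i=\lceil\log\log N_{2i-1}\rceil$ makes $c_i\log c_i$ dominate $\log\log N_{2i-1}$, so that $\log(N_{2i-1}+2)\cdot m^{c_i}/c_i!\to 0$ for every fixed $m$, while $c_i=o(N_{2i-1})$ keeps the density computation intact. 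Balancing these two requirements --- and checking the tail bound uniformly enough to pass to the limit for each fixed $m$ --- is the only genuinely delicate point; everything else is routine verification.
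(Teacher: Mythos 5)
Your proposal is correct and follows essentially the same strategy as the paper's proof: a Taylor series supported on long blocks $B$ of upper density one, with coefficients capped (there by $\min\{\omega_n,n\}$, here by $\min\{\inf_{k\ge n}\omega_k,\log(n+2)\}$) so that (a) holds, blow-up along $B$ read off from $\|D^jf\|_{\infty,\overline{B(0,m)}}\ge|D^jf(0)|$, and decay along interleaved density-one blocks $A$ obtained because the surviving terms of $D^jf$ begin far enough out for the factorial to dominate. The only substantive difference is the mechanism of the decay estimate: the paper forces each $B$-block to start beyond $2\alpha_N^2$ and uses $n(n-1)\cdots(n-j+1)\le n^{n/2}$ against a prepared convergent majorant, whereas you trim a buffer $c_i=\lceil\log\log N_{2i-1}\rceil$ from the zero blocks and beat the factor $\log(N_{2i-1}+2)$ with $m^{c_i}/c_i!$ via the shift identity $D^jf=\sum_k d_{k+j}z^k/k!$; both balances check out.
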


\begin{proof}
Firstly, the radius of convergence of the series \,$\sum_{n=1}^\infty n^{1 + {n \over 2}} \, {z^n \over n!}$ \,is
$$R = \lim_{n \to \infty} {n^{1 + {n \over 2}}/n! \over (n+1)^{1 + {n+1 \over 2}}/(n+1)!} = +\infty .$$
Therefore it
converges at every \,$z \in \C$. In particular, we have
$$\lim_{k \to \infty} \sum_{n=k}^\infty n^{1 + {n \over 2}} \, {R^n \over n!} = 0 \hbox{ \ for any } \,R > 0. \eqno (1)$$

Let us introduce some notation. Define \,$\widetilde{\omega_n} := \min \{\omega_n,n\}$ $(n \ge 1)$. Since
\,${\widetilde{\omega_n} \over n!} \le {1 \over (n-1)!}$ for all $n \in \N$, we have obtain that,
for any selection of the set \,$S \subset \N$,
the expression \,$\sum_{n \in S} \widetilde{\omega_n} \, {z^n \over n!}$ \,defines an entire function.

\vskip .15cm

The core of the proof is the search for an entire function \,$f$ \,having large derivatives \,$D^{j}f$ \,for many indexes $j$ and, simultaneously,
having small derivatives \,$D^{j}f$ \,for other (many) indexes $j$.

\vskip .15cm

Thanks to (1), we can select \,$\al_1 \in \N$ \,such that
$$\sum_{n=\al_1}^\infty n^{1 + {n \over 2}} \, {1 \over n!} < 1.$$
Now, we choose \,$\beta_1 \in \N$ \,with \,$\beta_1 > 2 \alpha_1^2$. >From (1), again,
we obtain an \,$\alpha_2 \in \N$ \,with \,$\alpha_2 > \beta_1^2$ \,such that
$$\sum_{n=\al_2}^\infty n^{1 + {n \over 2}} \, {2^n \over n!} < {1 \over 2}.$$
Choose \,$\beta_2 \in \N$ \,with \,$\beta_2 > 2 \alpha_2^2$. Proceeding in this way, we can recursively obtain
two sequences \,$\{\al \}_{n \ge 1}$ \,and \,$\{\beta \}_{n \ge 1}$ \,of natural numbers satisfying
$$
\al_1 < 2 \al_1^2 < \beta_1 < \beta_1^2 < \cdots < \alpha_n < 2 \al_n^2 < \beta_n < \beta_n^2 < \alpha_{n+1} < \cdots
$$
and
$$
\sum_{n=\al_N}^\infty n^{1 + {n \over 2}} \, {N^n \over n!} < {1 \over N} \hbox{ \ for all } N \in \N . \eqno (2)
$$

Now, define the sets
$$
A := \bigcup_{n=1}^\infty \{\al_n,\al_n + 1, \dots ,\al_n^2\} \hbox{ \ and \ } B := \bigcup_{n=1}^\infty \{\beta_n,\beta_n + 1, \dots ,\beta_n^2\}.
$$
Notice that
\begin{align*}
\overline{{\rm dens}} \, (A) & = \limsup_{n \to \infty}  {{\rm card} (A \cap \{1,\dots ,n\}) \over n} \\
& \ge  \limsup_{n \to \infty}  {{\rm card} (A \cap \{1,\dots ,\al_n^2\}) \over \al_n^2}\\
& =  \limsup_{n \to \infty}  {{\rm card} (\{\al_n , \al_n+1,\dots ,\al_n^2\}) \over \al_n^2}\\
& = \lim_{n \to \infty}  {\al_n^2 - \al_n + 1 \over \al_n^2} = 1.
\end{align*}
Hence \,$\overline{{\rm dens}} \, (A) = 1$ \,and, analogously, $\overline{{\rm dens}} \, (B) = 1$.

\vskip .15cm

Next, we set, for \,$n \in \N$:
$$
\omega_n^* := \left\{
\begin{array}{cl}
\widetilde{\omega_n} & \mbox{if } n \in B\\
0 & \mbox{otherwise}
\end{array}
\right.
$$
and define the entire function
$$f(z) = \sum_{n=1}^\infty \omega_n^* \, {z^n \over n!} = \sum_{n \in B} \widetilde{\omega_n} \, {z^n \over n!}.$$
Clearly, $|f^{(n)}(0)| = \omega_n^* \le \widetilde{\omega_n} \le \omega_n$ \,for all \,$n \ge 1$.
Therefore, our only task is to prove that \,$f$ \,is distributionally irregular for \,$D$ \,in \,$H(\C )$.

\vskip .15cm

With this aim, fix any \,$m \in \N$. If \,$n \in B$ \,then \,$\|D^{n}f\|_{\infty, \overline{B(0,m)}} \ge |f^{(n)}(0)| = \widetilde{\omega_n} = \min \{\omega_n ,n\}$, which entails that \,\,$\lim_{n \to \infty \atop n \in B} \|D^n f\|_{\infty, \overline {B(0,m)}} = \infty$.

\vskip .15cm

Finally, in order to show that \,$\lim_{n \to \infty \atop n \in A} D^{n}f = 0$ \,it is enough to prove that
\,$\lim_{n \to \infty \atop n \in A} \|f^{(n)}\|_{\infty, \overline {B(0,m)}} = 0$ \,for every \,$m \in \N$.
So, fix \,$m \in \N$ \,as well as an \,$\ve > 0$. Choose \,$N_0 \in \N$ \,with \,$N_0 \ge m$ \,and \,$1/N_0 < \ve$.
Denote \,$j_0 := \al_{N_0}$. For every \,$j \in A$ \,with \,$j \ge j_0$ \,there is (a unique) \,$N \ge N_0$
\,such that \,$\al_N \le j \le \al_N^2$. Then we obtain
\begin{equation*}
\begin{split}
f^{(j)} (z) &= \sum_{n=j}^\infty \omega_n^* \, n(n-1)(n-2) \cdots (n-j+1) \, {z^n \over n!} \\
            &= \sum_{n = 2 \al_N^2}^\infty \omega_n^* \, n(n-1)(n-2) \cdots (n-j+1) \, {z^n \over n!},
\end{split}
\end{equation*}
because \,$\omega_n^* = 0$ \,if \,$n \not\in B$. From (2), the triangle inequality and the
fact \,$\al^2_N > 2 \, \al_N$ \,we get for every \,$z \in \overline {B(0,m)}$ \,that
\begin{equation*}
\begin{split}
|f^{(j)} (z)| &\le \sum_{n = 2 \al_N^2}^\infty \omega_n^* \, n(n-1)(n-2) \cdots (n-j+1) \, {m^n \over n!} \\
              &\le \sum_{n = 2 \al_N^2}^\infty \omega_n^* \, n^j \, {m^n \over n!}
               \le \sum_{n = 2 \al_N^2}^\infty n \cdot n^{n \over 2} \, {N^n \over n!} \\
              &\le \sum_{n = \al_N}^\infty  n^{1 + {n \over 2}} \, {N^n \over n!} < {1 \over N} \le {1 \over N_0} < \ve .
\end{split}
\end{equation*}
Therefore \,$\|f^{(j)}\|_{\infty, \overline {B(0,m)}} < \ve$ \,whenever \,$j \in A$ \,and \,$j \ge j_0$.
In other words, $\lim_{j \to \infty \atop j \in A} \|f^{(j)}\|_{\infty, \overline {B(0,m)}} = 0$, and we are done.
\end{proof}

\begin{remarks}\label{comment}
{\rm
1. The idea of making large gaps in the sequence of Taylor coefficients, given in the previous proof,
is inspired by the construction of scrambled sets for weighted backward shifts on K\"othe sequences spaces due to Wu \cite{Wu1} and Wu {\it et al.}~\cite{Wu2}.
\vskip 3pt
\noindent 2. Observe that the function \,$f$ \,constructed in the proof of Theorem \ref{coefficientstheorem} is $D$-distributionally irregular in a sense stronger than the one given in Definition \ref{def-D-distrirregular}, because the set \,$B$ \,satisfying
\,$\lim_{n \to \infty \atop n \in B} \|D^n f\|_{\infty, \overline {B(0,m)}} = \infty$ \,holds {\it for any} $m \in \N$.
\vskip 3pt
\noindent 3. Observe that the sequences \,$\{\al_n\}_{n \ge 1}$ \,and \,$\{\beta _n\}_{n \ge 1}$ \,(hence the sets \,$A$ \,and \,$B$)
are {\it independent} of the sequence \,$\{\omega_n\}_{n \ge 1}$. This fact will be exploited in the next section.
}
\end{remarks}

Now, we are ready to present the main result of this section.

\begin{theorem} \label{main theorem}
{\it Let \,$1\le p\le \infty$, and set \,$a = \displaystyle \frac{1}{2 \max\{2,p\}}$. Then, for every \,$\ve > 0$, there exists a distributionally irregular entire function \,$f$ \,for the differentiation operator acting on \,$H(\C )$ \,such that
$$
M_{p}(f,r)\leq C\displaystyle \frac {e^r}{r^{a-\ve}}  \quad (r > 0)
$$
for some constant \,$C>0$.}
\end{theorem}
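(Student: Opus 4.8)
The plan is to feed the construction of Theorem \ref{coefficientstheorem} a judiciously chosen weight sequence $\{\omega_n\}$ and then estimate the integral means $M_p(f,r)$ of the resulting function by combining the Hausdorff--Young inequality with Lemma \ref{L-Bar}. The essential tension to resolve is that Theorem \ref{coefficientstheorem} requires $\omega_n \to \infty$ (this is exactly what produces the distributional irregularity via the set $B$), whereas control of $M_p(f,r)$ is easiest when the Taylor coefficients are small or bounded. The compromise is to let $\omega_n$ tend to infinity as slowly as a small power, namely $\omega_n := (n+1)^{\varepsilon}$, so that the growth of the coefficients costs only a factor $r^{\varepsilon}$ in the final bound, shifting the exponent from the critical $a$ down to $a-\varepsilon$.

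First I would treat the range $2 \le p \le \infty$. Setting $\omega_n := (n+1)^{\varepsilon}$ (which tends to $\infty$), Theorem \ref{coefficientstheorem} yields a $D$-distributionally irregular entire function $f$ with $|f^{(n)}(0)| \le (n+1)^{\varepsilon}$ for all $n \ge 1$. Writing $f(z) = \sum_n a_n \frac{z^n}{n!}$, so that $|a_n| = |f^{(n)}(0)| \le (n+1)^{\varepsilon}$, and letting $q = \frac{p}{p-1} \in [1,2]$ be the conjugate exponent, the Hausdorff--Young inequality recalled above gives
\[
M_p(f,r) \le \Big( \sum_n |a_n|^q \frac{r^{qn}}{(n!)^q} \Big)^{1/q} \le \Big( \sum_n \frac{r^{qn}}{(n+1)^{-\varepsilon q}(n!)^q} \Big)^{1/q}.
\]
Now I would invoke Lemma \ref{L-Bar} with $\alpha = q$ and $\beta = -\varepsilon q$ (both within its hypotheses, since $0 < q \le 2$), obtaining an upper bound $C\, r^{(1-q)/2 + \varepsilon q}\, e^{qr}$ for the inner sum. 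Taking $q$-th roots and using the elementary identity $\frac{1-q}{2q} = -\frac{1}{2p} = -a$ (valid for $2 \le p < \infty$, and trivially for $p=\infty$, where $q=1$ and $a=0$), this collapses to $M_p(f,r) \le C'\, r^{-a+\varepsilon}\, e^r = C'\, e^r / r^{a-\varepsilon}$ for all $r > 0$, which is the desired estimate.

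For the remaining range $1 \le p < 2$ I would reduce to the case just settled. Here $\max\{2,p\} = 2$, so $a = \tfrac{1}{4}$ coincides with its value at $p=2$; and since $(\mathbb{T}, \frac{d\theta}{2\pi})$ is a probability space, the monotonicity of $L^p$-norms gives $M_p(f,r) \le M_2(f,r)$. The case $p=2$ above then furnishes $M_2(f,r) \le C\, e^r / r^{1/4-\varepsilon}$, so the very same function $f$ serves for every $p \in [1,\infty]$. I do not expect a genuine obstacle in this argument: the substantive analytic content already resides in Theorem \ref{coefficientstheorem} and Lemma \ref{L-Bar}. The only points demanding real care are the exponent bookkeeping that pins down the critical order $a = \frac{1}{2\max\{2,p\}}$, and the verification that the parameters $\alpha = q$ and $\beta = -\varepsilon q$ remain admissible for Lemma \ref{L-Bar} uniformly across the whole range $2 \le p \le \infty$.
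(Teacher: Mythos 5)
Your proposal is correct and follows essentially the same route as the paper's own proof: choose $\omega_n$ to be a small power (the paper uses $n^{\ve}$, you use $(n+1)^{\ve}$, which is immaterial), invoke Theorem \ref{coefficientstheorem}, apply the Hausdorff--Young inequality together with Lemma \ref{L-Bar} with $\alpha=q$ and $\beta=-\ve q$, and reduce the range $1\le p<2$ to $p=2$ via $M_p(f,r)\le M_2(f,r)$. Your exponent bookkeeping $\frac{1-q}{2q}=-\frac{1}{2p}=-a$ and the explicit check that $\alpha=q\in[1,2]$ is admissible are the same computations the paper performs (the latter implicitly).
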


\begin{proof}
Since
\[
M_p(f,r)\leq M_{2}(f,r)\quad \mbox{for $1\leq p < 2$,}
\]
we need only prove the result for $p \geq 2$.

\vskip .15cm

Then fix \,$p \ge 2$ \,as well as an \,$\ve > 0$ \,and denote, as usual, by \,$q$ \,the conjugate exponent of \,$p$.
Take \,$\omega_n := n^\ve$ $(n \ge 1)$. Of course, we have \,$\omega_n \to +\infty$. By Theorem \ref{coefficientstheorem}
there exists a $D$-distributionally irregular entire function \,$f$ \,satisfying ($f(0)=0$ and)
\,$|f^{(n)}(0)| \le n^\ve$ \,for all $n \ge 1$.

\vskip .15cm

Finally, making use of the
Hausdorff-Young inequality  and Lemma \ref{L-Bar} (with $\alpha = q$ and $\beta = - \ve q$), we have that
$$
M_{p}(f,r) \leq \left(\sum_{n=1}^{\infty}  |f^{(n)}(0)| \, (\frac{r^{n}}{n!})^q \right)^{\frac{1}{q}}
\leq \left(\sum_{n=1}^{\infty}  n^{\ve q}(\frac{r^{n}}{n!})^q \right)^{\frac{1}{q}} \le C \, \frac {e^r}{r^{\frac{1}{2p}-\ve}}
= C \,\frac {e^r}{r^{a - \ve}}
$$
for all \,$r > 0$ \,and some positive constant \,$C$, which proves the theorem.
\end{proof}

The following figure  shows our present knowledge of possible or impossible rates $\frac {e^r}{r^{a}}$ for distributionally irregular entire
function for differentiation operator.

\begin{center}
\begin{tikzpicture}[scale=5,>=stealth]
 \draw  (1,0)--(1,0.8);%eje de las y
 \draw [<->] (0.8,0)--(3,0);
 %\node[below] at (-0.05,0) {0};
 \node[left] at (1,0.25) {$\frac{1}{4}$};
 \draw  (1,0.25)--(2,0.25);
 \node[left] at (1,0.5) {$\frac{1}{2}$};
 \node[below] at (1,0) {1};
 \node[below] at (2,0) {2};
   \draw (2,0.01)--(2,-0.01);
 \draw  (1,0.5)--(3,0.5);
 \draw  plot [domain=2:3] (\x,1/2*1/\x);
 \node[right] at (3,1/6) {$a=\frac{1}{2p} $};
 \node[below] at (3,0) {$p $};
 \node[left] at (1,0.8) {$a$};
 \node[left] at (2,0.75) {No};
  \node[left] at (2,.12) {Yes};
  \node[left] at (2.5,0.4) {?};
   \end{tikzpicture}
\end{center}

\begin{problem}
For each \,$p \in [1, \infty ]$, give the critical order of growth between $\frac{e^r}{\sqrt{r}}$
and $\frac {e^r}{r^{a-\ve}}$  \,(with  $a = \frac{1}{2 \max\{2,p\}}$)
\,of a distributionally irregular entire function for the differentiation operator.
\end{problem}

\section{Dense linear manifolds of distributionally irre\-gu\-lar functions}

\quad The study of lineability, that is, the search of linear (or, in general, algebraic) structures
within nonlinear sets has become a trend in the last two decades, see e.g.~the survey \cite{BPS}.
In particular, if \,$X$ \,is a topological vector space (over \,$\R$ \,or \,$\C$) and \,$S$ \,is a subset of \,$X$, then
\,$S$ \,is called {\it dense-lineable} in \,$X$ \,provided that there exists a dense vector subspace \,$M$ \,of \,$X$ \,such that
\,$M \subset S \cup \{0\}$. If, in addition, $M$ \,can be found with \,${\rm dim} \,(M) = {\rm dim} \, (X)$, then \,$S$
\,is said to be {\it maximal dense-lineable} in \,$X$. In this section, we consider the lineability of the family of
$D$-distributionally irregular entire functions having growth restrictions.

\vskip .15cm

To this end, we will need the next lemma, whose content can be found in \cite{BPS} (see also \cite{AGPS,Bernal2008,Bernal2010,BerOrd}).
Following \cite{AGPS}, if \,$\mathcal{A, \, B}$ \,are subsets of a vector space, then we say that
\,$\mathcal A$ \, is {\it stronger than} \,$\mathcal B$ \,whenever \,${\mathcal A} + {\mathcal B} \subset {\mathcal A}$.

\begin{lemma}\label{maximal dense-lineable}
{\it Assume that \,$X$ \,is a metrizable topological vector space. Let \,$\mathcal{A} \subset X$. Suppose that there exists a dense vector subspace       \,$\mathcal{B} \subset X$ \,such that \,$\mathcal{A}$ \,is stronger than \,$\mathcal{B}$ \,and \,$\mathcal{A} \, \cap \, \mathcal{B} = \emptyset$.
Suppose also that \,$\mathcal{A} \cup \{0\}$ \,contains a vector subspace whose dimension equals {\rm dim}$\,(X)$.  Then \,$\mathcal{A}$ \,is maximal dense-lineable.}
\end{lemma}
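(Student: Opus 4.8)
The plan is to construct, in one stroke, a subspace $M\subseteq\mathcal A\cup\{0\}$ that is \emph{simultaneously} dense and of dimension $\dim(X)$. The tension to resolve is structural: the natural source of density is $\mathcal B$, yet $\mathcal A\cap\mathcal B=\emptyset$ forbids placing any nonzero vector of $\mathcal B$ into $M$, while the source of large dimension (and of membership in $\mathcal A$) is the subspace $V\subseteq\mathcal A\cup\{0\}$ with $\dim(V)=\dim(X)$ granted by hypothesis. The idea is to glue the two together by a \emph{small perturbation}, arranged so that every nonzero element of $M$ keeps a nonzero component inside $V$, and then to push it into $\mathcal A$ via the relation $\mathcal A+\mathcal B\subseteq\mathcal A$.

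First I would fix a translation-invariant metric $d$ compatible with the topology (available in any metrizable topological vector space, and precisely the metric displayed in the Introduction when $X=H(\C)$), and a Hamel basis $\{v_i\}_{i\in I}$ of $V$, so that $|I|=\dim(X)=:\mu$, an infinite cardinal. I would split $I=I_1\sqcup I_2$ with $|I_1|=|I_2|=\mu$ (possible since $\mu$ is infinite). Using that the density character of $X$ does not exceed $\dim(X)$, I would take a dense family $\{u_j\}_{j\in J}\subseteq\mathcal B$ with $|J|\le\mu$; in the separable case relevant to $H(\C)$ this is just a dense sequence $\{u_n\}_{n\in\N}$, which keeps the bookkeeping elementary.

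The core construction is as follows. To each approximating vector attach, injectively, distinct basis vectors $w_{j,k}\in\{v_i:i\in I_1\}$ (for $k\in\N$) and scalars $\lambda_{j,k}\neq 0$ chosen so small that $d(\lambda_{j,k}w_{j,k},0)<1/k$, and set $g_{j,k}:=u_j+\lambda_{j,k}w_{j,k}\in\mathcal A+\mathcal B\subseteq\mathcal A$. Define $M:=\spa\big(\{g_{j,k}\}\cup\{v_i:i\in I_2\}\big)$. I would then verify three things. \emph{Density}: given $x\in X$ and $\delta>0$, choose $j$ with $d(u_j,x)<\delta/2$ and $k$ with $1/k<\delta/2$, so by translation invariance $d(g_{j,k},x)\le d(u_j,x)+d(\lambda_{j,k}w_{j,k},0)<\delta$; hence $\{g_{j,k}\}$, and a fortiori $M$, is dense. \emph{Dimension}: $M$ contains the $\mu$ linearly independent vectors $\{v_i:i\in I_2\}$, so $\dim(M)=\mu=\dim(X)$. \emph{Membership}: a generic element decomposes as
\[
x=\sum_{F} c_{j,k}\,g_{j,k}+\sum_{G} d_i v_i=\Big(\sum_{F} c_{j,k}\,u_j\Big)+\Big(\sum_{F} c_{j,k}\lambda_{j,k}w_{j,k}+\sum_{G} d_i v_i\Big),
\]
where $F$ and $G$ are finite. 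The first bracket lies in $\mathcal B$; call the second bracket $w\in V$. Since the $w_{j,k}$ and the $v_i\ (i\in I_2)$ are distinct basis vectors of $V$, they are linearly independent, so $x\neq 0$ forces $w\neq 0$; then $w\in V\setminus\{0\}\subseteq\mathcal A$ and $x=w+(\text{vector of }\mathcal B)\in\mathcal A+\mathcal B\subseteq\mathcal A$. Thus $M\setminus\{0\}\subseteq\mathcal A$, and $\mathcal A$ is maximal dense-lineable.

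The main obstacle is exactly the membership step: one must prevent any nonzero combination of the $g_{j,k}$ from collapsing its $V$-component to $0$, since that would expose a nonzero vector of $\mathcal B$, which lies outside $\mathcal A$. This is why the perturbations are drawn from \emph{distinct} basis directions of $V$ rather than a single one, and it is the only role of the disjointness hypothesis $\mathcal A\cap\mathcal B=\emptyset$ — it certifies that such a collapse is the sole way membership could fail. The density and dimension bookkeeping is routine once a translation-invariant metric and the bound (density character of $X$)\,$\le\dim(X)$ are in hand; in the application $X=H(\C)$ everything simplifies, as $X$ is separable and $\dim(X)=\mathfrak c$.
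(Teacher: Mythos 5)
Your proof is correct; note that the paper itself gives no proof of this lemma, merely citing \cite{BPS} (see also \cite{AGPS,Bernal2008,Bernal2010,BerOrd}), and your argument is essentially the standard one from those sources: perturb a dense subset of $\mathcal{B}$ by small nonzero multiples of pairwise distinct Hamel-basis vectors of the large subspace $V\subseteq\mathcal{A}\cup\{0\}$, reserve a further $\dim(X)$ many basis directions for the dimension count, and use $\mathcal{A}+\mathcal{B}\subseteq\mathcal{A}$ together with the linear independence of the perturbing directions to conclude that every nonzero element of the resulting span lies in $\mathcal{A}$. A minor observation: your construction never actually invokes the hypothesis $\mathcal{A}\cap\mathcal{B}=\emptyset$, which is harmless and consistent with the known fact that this assumption can be dispensed with in this form of the criterion.
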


Denote by \,$\mathcal C$ \,the class of functions satisfying simultaneously all properties and growth restrictions considered in the previous section. More precisely, we denote
\begin{equation*}
\begin{split}
\mathcal{C} := \{f \in H(\C ): \,\, &f \hbox{ is } D\hbox{-distributionally irregular in } H(\C ) \hbox{ and } \\
                                    &\sup_{r > 0} r^{\frac{1}{2\, \max\{2,p\}} - \ve} \, e^{-r} \, M_p(f,r) < \infty \\
                                    &\hbox{ for all } \ve > 0 \hbox{ and all } p \in [1,\infty ]\}.
\end{split}
\end{equation*}

The following theorem reveals a rich linear structure inside this (seemingly small) class.

\begin{theorem}
{\it The set \,$\mathcal C$ \,is maximal dense-lineable in \,$H(\C )$.}
\end{theorem}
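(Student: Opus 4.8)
The plan is to apply Lemma~\ref{maximal dense-lineable} with $X = H(\C)$ and $\mathcal{A} = \mathcal{C}$, so the work reduces to verifying the three hypotheses of that lemma. The key observation, flagged in Remark~\ref{comment}(3), is that the sets $A$ and $B$ produced in the proof of Theorem~\ref{coefficientstheorem} depend only on the ambient construction and not on the prescribed coefficient bound $\{\omega_n\}$. This uniformity is what will let us build an entire vector subspace of $\mathcal{C} \cup \{0\}$ rather than merely isolated functions, since distributional irregularity along a \emph{fixed} pair $(A,B)$ of density-one sets is preserved under the linear operations we need.

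First I would exhibit the required \emph{dense} subspace $\mathcal{B}$ with $\mathcal{A} + \mathcal{B} \subset \mathcal{A}$ and $\mathcal{A} \cap \mathcal{B} = \emptyset$. The natural choice is $\mathcal{B} = $ the space of polynomials, which is dense in $H(\C)$. If $f \in \mathcal{C}$ and $P$ is a polynomial, then $D^n P = 0$ for all large $n$, so adding $P$ does not disturb the limit $\lim_{n \in A} D^n f = 0$ (the tail is identical for large $n$) nor the blow-up $\lim_{n \in B} \|D^n f\|_{\infty,\overline{B(0,m)}} = \infty$; moreover the growth estimate $M_p(f+P,r) \le M_p(f,r) + M_p(P,r)$ is unaffected for large $r$ since polynomials grow polynomially, which is negligible against $e^r/r^{a-\ve}$. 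Hence $\mathcal{A} + \mathcal{B} \subset \mathcal{A}$. The disjointness $\mathcal{A} \cap \mathcal{B} = \emptyset$ is immediate, as no polynomial can be distributionally irregular (its derivatives eventually vanish, so the $\limsup$ along $B$ is $0$, not $\infty$).

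The main obstacle is the third hypothesis: producing a vector subspace $M \subset \mathcal{C} \cup \{0\}$ with $\dim M = \dim H(\C) = \mathfrak{c}$. The strategy I would pursue is to fix the common sets $A, B$ from Theorem~\ref{coefficientstheorem} and build a large family of functions all supported (in the Taylor sense) on $B$, whose blow-up along $B$ and decay along $A$ survive arbitrary nonzero linear combinations. Concretely, one partitions the blocks $\{\beta_n,\dots,\beta_n^2\}$ comprising $B$ into countably many infinite subfamilies and, using a Hamel-basis or a continuum-sized almost disjoint family of index sets, assigns to each parameter a function $f_\lambda$ whose nonzero coefficients sit on $\lambda$'s blocks, chosen so that $|f_\lambda^{(n)}(0)| \le \min\{n^\ve, n\}$ uniformly. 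The delicate point is ensuring that a \emph{nontrivial finite linear combination} $g = \sum c_i f_{\lambda_i}$ still satisfies $\lim_{n \in B} \|D^n g\|_{\infty,\overline{B(0,m)}} = \infty$: because $f(0)=0$ and the coefficients on distinct $\lambda_i$-blocks do not cancel, one can locate, within $B$, infinitely many indices $n$ isolating a single $f_{\lambda_i}$ with nonzero contribution, forcing $|g^{(n)}(0)| \to \infty$ along a density-one subset of $B$. The decay $\lim_{n \in A} \|D^n g\|_{\infty,\overline{B(0,m)}} = 0$ and the growth bound $g \in \mathcal{C}$ then follow from linearity and the tail estimate~(2), exactly as in the single-function argument, since each $f_{\lambda_i}$ individually obeys these bounds and finite sums preserve them.

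Thus the scheme is: verify the polynomial space serves as $\mathcal{B}$ (routine), then invest the real effort in the continuum-dimensional $M$, where the crux is arranging the coefficient supports along $B$ so that distributional blow-up is \emph{robust} under linear combination. Once $M$ is in hand, Lemma~\ref{maximal dense-lineable} delivers maximal dense-lineability of $\mathcal{C}$ directly.
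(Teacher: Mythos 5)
Your overall scheme is correct and the polynomial half ($\mathcal{B}=\{$polynomials$\}$, density, $\mathcal{A}+\mathcal{B}\subset\mathcal{A}$, $\mathcal{A}\cap\mathcal{B}=\varnothing$) coincides with the paper's. Where you genuinely diverge is in building the continuum-dimensional subspace $M\subset\mathcal{C}\cup\{0\}$: you propose an \emph{almost-disjoint-supports} family, distributing Taylor coefficients over an almost disjoint family of unions of blocks $\{\beta_n,\dots,\beta_n^2\}$, so that a nontrivial combination is eventually governed by a single summand on density-one many indices. The paper instead keeps \emph{all} functions supported on the whole of $B$ and varies the growth rate of the coefficients, taking $f_t(z)=\sum_{n\in B}\min\{n,(\log(n+1))^t\}\,z^n/n!$ for $t>0$; for $F=\sum_k c_k f_{t_k}$ with $c_s\neq 0$ and $t_1<\cdots<t_s$, the term with the largest exponent $t_s$ dominates asymptotically, so $|F^{(n)}(0)|\to\infty$ along $B$, which simultaneously proves linear independence and the blow-up. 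Your route works but needs two points made explicit that you gloss over: (i) each infinite subfamily of blocks must still have upper density $1$ (true, since $(\beta_{n_k}^2-\beta_{n_k}+1)/\beta_{n_k}^2\to 1$ along any infinite subsequence, but it has to be said); and (ii) membership in $\mathcal{C}$ requires $|f^{(n)}(0)|\le K_\ve n^\ve$ for \emph{every} $\ve>0$, so you cannot fix a single $\ve$ as your phrasing suggests --- you must choose coefficients growing more slowly than any power (logarithmically, as the paper does), otherwise your functions satisfy the growth restriction only for the chosen $\ve$ and larger. With those repairs both approaches deliver the theorem; the paper's has the advantage that the dominance argument is one computation serving both independence and irregularity, while yours is the more flexible, shift-style technique that would generalize to situations where no natural one-parameter scale of weights is available.
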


\begin{proof}
We apply Theorem \ref{coefficientstheorem} to the sequence \,$\omega_n := (\log (n+1))^t$, where \,$t > 0$.
By the construction given in the proof of Theorem \ref{coefficientstheorem} (whose notation we keep here) and Remark \ref{comment}.3, there are subsets \,$A$ \,and \,$B$ \,of \,$\N$ \,with maximal upper density satisfying that, for each \,$t > 0$, the entire function
$$
f_t(z) := \sum_{n \in B} \min \{n,(\log (n+1))^t\} \, {z^n \over n!}
$$
is $D$-distributionally irregular in \,$H(\C )$. In fact, on one hand, we have that \,$|f^{(n)}_t(0)| = \min \{n,(\log (n+1))^t\}$ \,for all
\,$n \in B$ \,and, on the other hand, the proof of Theorem \ref{coefficientstheorem} allows to obtain that \,$\lim_{n \to \infty \atop n \in A} \|D^n f_t\|_{\infty, \overline {B(0,m)}} = 0$ \,for all \,$m \in \N$ \,and all \,$t > 0$.

\vskip .15cm

Observe first that the functions \,$f_t$ $(t > 0)$ \,are linearly independent. Indeed, assume that \,$c_1, \dots ,c_s$ \,are complex numbers (with $s \ge 2$ \,and \,$c_s \ne 0$) and that \,$0 < t_1 < \cdots < t_s$. If the linear combination
$$
F := \sum_{k=1}^s c_k f_{t_k} \eqno (3)
$$
is identically zero then, after derivation, we get
$$
|F^{(n)}(0)| = \big|\sum_{k=1}^s c_k f^{(n)}_t (0) \big| = \big|\sum_{k=1}^s c_k f^{(n)}_t (0) \big| = \big|\sum_{k=1}^s c_k \, \min \{n,(\log (n+1))^{t_k}\} \big| \,\, (n \in B).
$$
Since \,$(\log (n+1))^{t_k} \le (\log (n+1))^{t_s}$ $(n \in \N ; \, k=1, \dots ,s)$ \,and \,${n \over (\log (n+1))^{t_s}} \to +\infty$ \,as
\,$n \to \infty$, one derives that \,$|F^{(n)}(0)| = |\sum_{k=1}^s c_k \, \log (n+1))^{t_k}|$ \,for $n \in B$ large enough. Hence
\,$|F^{(n)}(0)| \to +\infty$ \,as \,$n \to \infty$ $(n \in B)$, which is absurd. Then \,$F$ \,is not identically $0$, which shows the linear independence of \,$\{f_t: \, t > 0\}$.

\vskip .15cm

Define \,$M := {\rm span} \, \{f_t: \, t > 0\}$. According to the previous paragraph, $M$ \,is a vector subspace of \,$H(\C )$ \,with
\,${\rm dim} \, (M) = \mathfrak{c} = {\rm dim} \, (H(\C ))$, where \,$\mathfrak{c}$ \,denotes the cardinality of the continuum. Fix any
$F \in M \setminus \{0\}$. Then \,$F$ \,has the form (3), with the \,$c_k$'s and the \,$t_k$'s as above. Therefore, for given
\,$m \in \N$, we have \,$\lim_{n \to \infty \atop n \in B} \|D^{n}F\|_{\infty, \overline{B(0,m)}} \ge \lim_{n \to \infty \atop n \in B} |F^{(n)}(0)| = +\infty$. In addition, by the triangle inequality,
$$
\lim_{n \to \infty \atop n \in A} \|D^n F\|_{\infty, \overline {B(0,m)}} \le \sum_{k=1}^s |c_k| \lim_{n \to \infty \atop n \in A} \|D^n f_{t_k}\|_{\infty, \overline {B(0,m)}} = 0.
$$
This shows that \,$F$ \,is $D$-distributionally irregular.
Now, given \,$\ve > 0$, there is a constant \,$K = K(\ve ,c_1, \dots ,c_s,t_1, \dots ,t_s) \in (0,+\infty)$ \,such that
\,$|F^{(n)}(0)| \le K \, n^\ve$ \,for all \,$n \ge 1$. The  approach of the final part of the proof of Theorem \ref{main theorem} yields the existence of positive constants \,$C = C_{\ve ,p}$ $(p \in [1,\infty ])$ \,satisfying
$$
M_p(F,r) \le C_{\ve ,p} \, {e^r \over r^{\frac{1}{2\, \max\{2,p\}} - \ve}} \hbox{ \ for all } r > 0.
$$
In other words, $F \in {\mathcal C}$. Thus, our class \,${\mathcal C}$ \,contains, except for $0$, a vector space having maximal dimension.

\vskip .15cm

Finally, let \,$X := H(\C )$, ${\mathcal A} := {\mathcal C}$ \,and \,${\mathcal B} := \{$complex polynomials$\}$.
Recall that \,${\mathcal B}$
\,is a dense vector subspace of \,$H(\C )$. Moreover, given \,$P \in {\mathcal B}$, one has \,$P^{(n)} = 0$ \,for \,$n$ \,large enough. It follows, trivially, that \,$P$ \,is not $D$-distributionally irregular but \,$P + f$ \,is $D$-distributionally irregular if \,$f$ \,is. In addition, it is an easy exercise to prove that, for every \,$b \in \R$ and every \,$p \in [1,\infty ]$, the set \,$\{f \in H(\C ): \, \sup_{r > 0} r^b \, e^{-r} \, M_p(f,r) < \infty\}$ \,is a vector space containing the polynomials. Consequently, ${\mathcal A} \, \cap {\mathcal B} = \varnothing$ \,and \,${\mathcal A} + {\mathcal B} \subset \mathcal{A}$. An application of Lemma \ref{maximal dense-lineable} yields the maximal dense-lineability of \,${\mathcal C}$.
\end{proof}

\section{Weighted Banach spaces of entire functions}

\quad In this brief section, we establish the existence of distributionally irregular vectors for the
differentiation operator acting on certain weighted Banach spaces of entire functions. As a sub-product,
large linear manifolds consisting of such vectors will be obtained again.

\vskip .15cm

A {\it weight} \,$v$ \,on \,$\C$ \,will be is a strictly positive continuous
function on $\C$ which is radial, i.e.~$v(z)=v(|z|)$ $(z \in \C )$,
such that \,$v(r)$ \,is non-increasing on \,$[0,\infty )$ \,and satisfies
\,$\lim_{r \rightarrow \infty} r^m v(r)=0$ \,for each \,$m \in \N$.

\vskip .15cm

We define, for $1 \le p \le \infty$ and a weight function $v$, the following spaces as in \cite{lusky2000on}:
$$
B_{p,\infty}=B_{p,\infty}(\C,v):= \{ f\in H(\Bbb C) : \, \sup_{r>0} v(r)M_{p}(f,r) <\infty \}
$$
and
$$
B_{p,0}=B_{p,0}(\C,v):= \{ f\in H(\Bbb C) : \, \displaystyle \lim_{r\rightarrow \infty} v(r)M_{p}(f,r) =0 \}.
$$
These spaces are Banach spaces with the norm
$$
\|f\|_{p,v} = \|f\|_{p,\infty,v}:= \sup_{r>0} v(r)M_{p}(f,r).
$$
According to \cite[Theorem 2.1]{lusky2000on}, the polynomials are contained in $B_{p,0}$ for all $1 \leq p \leq \infty$ and form a dense subset in it. In particular, each space \,$B_{p,0}$ \,is separable.

\vskip .15cm

It is worth noting that, if \,$X$ \,is a Banach space, then an operator \,$T \in B(X)$ \,happens to be distributionally chaotic in the sense of
Schweizer and J.~Sm\'{\i}tal \cite{schweizersmital} if and only if \,$T$ \,has a distributionally irregular vector \cite[Theorem 12]{BBMP}.

\begin{theorem} \label{theorem-weight}
{\it Let \,$v$ \,be a weight function such that \,$\lim _{r\rightarrow \infty}v(r)\frac {e^{r}}{r^{\frac{1}{2p}}} = 0$ \,for some $1\leq p \leq \infty$. If the differentiation
operator $D:B_{p,0} \rightarrow B_{p,0}$ is continuous, then there is a dense vector subspace of \,$B_{p,0}$ \,all of whose nonzero functions are distributionally irregular for this operator.}
\end{theorem}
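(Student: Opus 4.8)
The plan is to follow the strategy of the previous section and deduce the statement from the dense-lineability Lemma \ref{maximal dense-lineable}, applied with $X=B_{p,0}$, with $\mathcal{A}$ the set of all $D$-distributionally irregular functions in $B_{p,0}$ and $\mathcal{B}$ the subspace of polynomials (dense in $B_{p,0}$ by \cite{lusky2000on}). Since $D^nP=0$ for $n>\deg P$, every nonzero polynomial has an eventually null orbit, so $\mathcal{A}\cap\mathcal{B}=\varnothing$, while $P+f$ is distributionally irregular whenever $f$ is; thus $\mathcal{A}$ is stronger than $\mathcal{B}$ and disjoint from it (the continuity hypothesis on $D$ guarantees the orbits stay in $B_{p,0}$, so all norms below are finite). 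Everything therefore reduces to exhibiting inside $\mathcal{A}\cup\{0\}$ a subspace of dimension $\mathfrak{c}=\dim B_{p,0}$, i.e.\ a linearly independent family $\{f_t\}_{t\in(0,1)}$ of $D$-distributionally irregular functions in $B_{p,0}$ whose nonzero linear combinations remain in $\mathcal{A}$.

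To build the family I would invoke Theorem \ref{coefficientstheorem}, exploiting (Remark \ref{comment}.3) that the sets $A,B$ with $\udens(A)=\udens(B)=1$ it produces are independent of the prescribed coefficients. Writing $\rho(r):=v(r)\,e^r\,r^{-1/(2p)}$, the hypothesis is exactly $\rho(r)\to0$, so one can fix a nondecreasing budget $g_0$ with $g_0(r)\to\infty$, $g_0(r)\le\sqrt{r}$ and $g_0(2r)\rho(r)\to0$. For each $t\in(0,1)$ apply Theorem \ref{coefficientstheorem} to $\omega_n:=g_0(n)^t$, getting $f_t(z)=\sum_{n\in B}g_0(n)^t\,z^n/n!$ (here $g_0(n)^t\le\sqrt{n}\le n$, so $\widetilde{\omega_n}=g_0(n)^t$). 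Three facts, now in the norm $\|\cdot\|_{p,v}$, then have to be checked. First, blow-up along $B$: since $\|g\|_{p,v}\ge\lim_{r\to0^+}v(r)M_p(g,r)=v(0)\,|g(0)|$, for any nonzero $F=\sum_{k}c_kf_{t_k}$ (with $0<t_1<\dots<t_s$, $c_s\neq0$) one has $\|D^nF\|_{p,v}\ge v(0)\,\bigl|\sum_k c_k\,g_0(n)^{t_k}\bigr|$, which tends to $\infty$ along $B$ because $g_0(n)\to\infty$ makes the top power $g_0(n)^{t_s}$ dominate; the same computation forces $F\not\equiv0$, giving linear independence.

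Second, membership and decay. For $j\in A$ with $\alpha_N\le j\le\alpha_N^2$ the function $D^jf_t$ has, as in the proof of Theorem \ref{coefficientstheorem}, a zero of order $k_0=\beta_N-j>\beta_N/2$ at the origin and coefficients bounded by $g_0(\cdot)^t\le g_0$. Splitting $\sup_r$ at $r=k_0/4$: for $r\le k_0/4$ the series is geometric and $\sup_{r\le k_0/4}v(r)M_p(D^jf_t,r)$ is super-exponentially small (a factor $(e/4)^{k_0}$); for $r>k_0/4$ the Hausdorff--Young inequality with Lemma \ref{L-Bar} gives $M_p(D^jf_t,r)\le C\,g_0(2r)\,e^r\,r^{-1/(2p)}$, whence $v(r)M_p(D^jf_t,r)\le C\,g_0(2r)\rho(r)$, whose supremum over $r>k_0/4$ tends to $0$ as $k_0\to\infty$. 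Hence $\lim_{n\in A}\|D^nf_t\|_{p,v}=0$ (uniformly in $t$), and the same bound with $j=0$ gives $f_t\in B_{p,0}$; by the triangle inequality both properties pass to every finite combination $F$. Together with the blow-up this yields $F\in\mathcal{A}$, so $M:=\spa\{f_t:t\in(0,1)\}$ is the required $\mathfrak{c}$-dimensional subspace and Lemma \ref{maximal dense-lineable} concludes.

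The crux is the decay step. Theorem \ref{coefficientstheorem} only delivers $\lim_{n\in A}D^nf=0$ in $H(\C)$, i.e.\ uniformly on compacta, which says nothing about the global weighted norm $\sup_{r>0}v(r)M_p(\cdot,r)$ taken over all radii; this is where a naive transfer fails. The device that repairs it is precisely tuning the coefficient growth to the weight: choosing $\omega_n=g_0(n)^t$ with $g_0(2r)\rho(r)\to0$ makes the large-$r$ contribution uniformly small, while the large gap $\beta_N>2\alpha_N^2$ (the high-order zero of $D^jf$) kills the small-$r$ range. A secondary technical point is the sharp mean bound $M_p(\cdot,r)\le C\,g_0(2r)\,e^r\,r^{-1/(2p)}$ for $1\le p<2$, where Hausdorff--Young is unavailable; here I would obtain it by Lyapunov interpolation between $p=1$ and $p=2$, the exponent $\tfrac12$ at $p=1$ reflecting the concentration of $|f(re^{i\cdot})|$ near its maximum, so as to match the exponent $1/(2p)$ demanded by the weight.
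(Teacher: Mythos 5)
Your proposal takes a completely different route from the paper. The paper's proof is a short soft argument: by \cite[Theorem 2.3]{BB} the hypothesis on $v$ makes $D$ frequently hypercyclic on $B_{p,0}$ with $D^n\to 0$ pointwise on the dense set of polynomials; by Bayart--Ruzsa \cite[Corollary 15]{BR} this forces $D$ to be distributionally chaotic; and then \cite[Theorem 15]{BBMP} (distributional chaos plus a dense set on which the orbits tend to $0$) directly yields the dense subspace of distributionally irregular vectors. No explicit function is constructed and no mean estimates are needed. You instead attempt a direct construction in the spirit of Sections 2--3, which, if it worked, would even give maximal dense-lineability.

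However, there is a genuine gap in your decay step for $1\le p<2$. Your argument hinges on the bound $M_p(D^jf_t,r)\le C\,g_0(2r)\,e^r\,r^{-1/(2p)}$ with the exponent $1/(2p)$ exactly matching the hypothesis $v(r)e^r r^{-1/(2p)}\to 0$; a weaker exponent does not suffice, since for a weight such as $v(r)=r^{1/(2p)}e^{-r}/\log r$ one has $v(r)e^r r^{-1/4}\to\infty$ when $p<2$. For $p\ge 2$ Hausdorff--Young plus Lemma \ref{L-Bar} delivers this (modulo the fixable issue that $g_0(2m)$ inside the coefficient sum must be compared with $g_0(Cr)$ by splitting the sum at $m\asymp r$). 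But for $p<2$ Hausdorff--Young is unavailable, and your proposed ``Lyapunov interpolation between $p=1$ and $p=2$'' presupposes an estimate $M_1(f_t,r)\le Ce^r r^{-1/2}$ that you never establish and that does not follow from anything in the paper: the only tool available there is $M_p\le M_2\le Ce^r r^{-1/4+\ve}$, i.e.\ exponent $1/4$, and indeed the authors obtain the exponent $1/(2p)$ for $p\in[1,2)$ only \emph{as a consequence of} Theorem \ref{theorem-weight} proved by the soft route (see the Remark following it), not by a coefficient estimate. So the case $1\le p<2$ of your construction is not proved. A secondary, repairable omission: you use the continuity of $D$ on $B_{p,0}$ only to say orbits stay in the space, but you still need $\|D^nP\|_{p,v}$ to vanish eventually for polynomials $P$ (immediate) and you should verify that the super-exponential smallness on $r\le k_0/4$ is uniform after multiplying by the bounded factor $v(r)\le v(0)$, which is fine.
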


\begin{proof}
By \cite[Theorem 2.3]{BB}, if  \,$v$ \,is a weight function such that $\lim _{r\rightarrow \infty} v(r)\frac {e^{r}}{r^{\frac{1}{2p}} }=0$ for some $1\leq p \leq \infty$ and \,$D:B_{p,0} \rightarrow B_{p,0}$ \,is continuous, then \,$D$ \,is frequently hypercyclic.  Moreover, the set $X_{0}$ of the polynomials is  a dense subset  in $B_{p,0}$ such that \,$D^{n}f$ \,tends to \,$0$ \,in \,$B_{p,0}$ \,for all \,$f \in X_{0}$.

\vskip .15cm

On the other hand, Bayart and Ruzsa have recently proved \cite[Corollary 15]{BR} that if $X$ is a Banach space and $T \in B(X)$ is a frequently hypercyclic operator such that \,$T^n$ \,tends pointwise to \,$0$ \,on a dense subset of \,$X$ \,then \,$T$ \,is distributionally chaotic. Hence \,$D$ \,is distributionally chaotic on $B_{p,0}$. And since, once again, there exists a dense set $X_0$ such that \,$D^nf$ \,tends pointwise to \,$0$ for all $f\in X_0$, then by  \cite[Theorem 15]{BBMP} \,$D$ \,admits a dense vector subspace consisting (except for zero) of distributionally irregular functions in $B_{p,0}$.
\end{proof}

\begin{remark}
For any $b \in \R$, let be the weight \,$v(r) := r^b \, e^{-r}$ \, for $r\ge b$, which is non-increasing and satisfies that \,$\sup_{r > 0} {v(r) \over v(r+1)} < \infty$. Then, according to \cite[Proposition 2.1]{BB}, the operator \,$D:B_{p,0} \rightarrow B_{p,0}$ \,is continuous.
As a consequence of Theorem \ref{theorem-weight} we obtain that for every \,$p \in [1,2)$
\,and every \,$\ve > 0$ \,there exists an entire function \,$f$ \,such that
$$
M_{p}(f,r)\leq C \displaystyle \frac {e^r}{r^{\frac{1}{2p}-\ve}} \quad (r > 0)
$$
for some constant \,$C>0$ \,as well as two increasing sequences of
integers \,$A=(n_k)_k$ \,and \,$B=(m_k)_k$ \,such that
\,$\udens(A) = \udens(B) = 1$, satisfying  that \,$\lim_{k \to \infty}  D^{n_k}f = 0$ in $B_{p,0}$ (hence $D^{n_k}f \to 0$ in the topology of $H(\C )$, because convergence in $B_{p,0}$ implies uniform convergence on compacta) \,and \,$\lim_{k \to \infty}
\|D^{m_k}f\|_{p,v} =\infty$. But we do not know whether there exists \,$m \in \N$ such that
\,$\lim_{k \to \infty} \|D^{m_k}f\|_{\infty, \overline {B(0,m)}} = \infty$; that is, we do not know whether such an \,$f$ \,is $D$-irregular
in \,$H(\C )$, so we have not obtained an improvement of Theorem \ref{main theorem}.
\end{remark}

\vskip 5pt

\noindent {\bf Acknowledgements.} The first author is partially supported by the Plan Andaluz de Investigación de la Junta de Andalucía FQM-127 Grant P08-FQM-03543 and by MEC Grant MTM2012-34847-C02-01. The second author is partially supported by MEC and FEDER, project no.~MTM2014-52376-P.

{\small

}

\medskip

{\scriptsize
$$
\begin{array}{lr}
\mbox{Luis Bernal-Gonz\'alez } & \mbox{ Antonio Bonilla } \\
\mbox{Departamento de An\'alisis Matem\'atico } & \mbox{ Departamento de An\'alisis Matem\'atico } \\
\mbox{Universidad de Sevilla } & \mbox{ Universidad de La Laguna } \\
\mbox{Facultad de Matem\'aticas, Apdo.~1160 } &  \mbox{ C/Astrof\'{\i}sico Francisco S\'anchez, s/n } \\
\mbox{Avda.~Reina Mercedes, 41080 Sevilla, Spain} & \mbox{ 38271 La Laguna, Tenerife, Spain } \\  \mbox{E-mail: {\tt lbernal@us.es} } & \mbox{ E-mail: {\tt abonilla@ull.es} }
\end{array}
$$}

\end{document}